\newcommand\bC{\mathbb C}
\newcommand\bH{\mathbb H}
\newcommand\bO{\mathbb O}
\newcommand\bR{\mathbb R}
\newcommand\bZ{\mathbb Z}
\newcommand\cO{\mathcal O}
\newcommand\cT{\mathcal T}
\newcommand\cW{\mathcal W}
\newcommand\mono\hookrightarrow
\newcommand\epi\twoheadrightarrow
\newcommand\<\langle
\renewcommand\>\rangle
\newcommand\so{\mathfrak{so}}
\newcommand\Cliff{\mathrm{Clif\/f}}
\newcommand\Spin{\mathrm{Spin}}
\newcommand\g{\mathfrak{g}}
\renewcommand\d{\mathrm{d}}
\DeclareMathOperator\End{End}
\begin{document}


\renewcommand{\thefootnote}{$\star$}

\newcommand{\arXivNumber}{1603.06603}

\renewcommand{\PaperNumber}{116}

\FirstPageHeading

\ShortArticleName{The Quaternions and Bott Periodicity are Quantum Hamiltonian Reductions}

\ArticleName{The Quaternions and Bott Periodicity\\ Are Quantum Hamiltonian Reductions\footnote{This paper is a~contribution to the Special Issue ``Gone Fishing''. The full collection is available at \href{http://www.emis.de/journals/SIGMA/gone-fishing2016.html}{http://www.emis.de/journals/SIGMA/gone-fishing2016.html}}}

\Author{Theo JOHNSON-FREYD}

\AuthorNameForHeading{T.~Johnson-Freyd}

\Address{Perimeter Institute for Theoretical Physics, Waterloo, ON, Canada}
\Email{\href{mailto:theojf@perimeterinstitute.ca}{theojf@perimeterinstitute.ca}}
\URLaddress{\url{https://perimeterinstitute.ca/personal/tjohnsonfreyd/}}

\ArticleDates{Received August 30, 2016, in f\/inal form December 09, 2016; Published online December 11, 2016}

\Abstract{We show that the Morita equivalences $\mathrm{Clif\/f}(4) \simeq {\mathbb H}$, $\mathrm{Clif\/f}(7) \simeq \mathrm{Clif\/f}(-1)$, and $\mathrm{Clif\/f}(8) \simeq {\mathbb R}$ arise from quantizing the Hamiltonian reductions ${\mathbb R}^{0|4} // \mathrm{Spin}(3)$, ${\mathbb R}^{0|7} // G_2$, and ${\mathbb R}^{0|8} // \mathrm{Spin}(7)$, respectively.}

\Keywords{Clif\/ford algebras; quaternions; Bott periodicity; Morita equivalence; quantum Hamiltonian reduction; super symplectic geometry}

\Classification{15A66; 53D20; 16D90; 81Q60}

\renewcommand{\thefootnote}{\arabic{footnote}}
\setcounter{footnote}{0}

\medskip

This note provides (super) symplectic origins for the quaternion algebra $\bH$ and for the eight-fold ``Bott periodicity'' of Clif\/ford algebras (due originally to Cartan \cite{CartanNombresComplexes}) in terms of quantum Hamiltonian reduction. Clif\/ford algebras arise in symplectic supergeometry as the Weyl (aka canonical commutation) algebras of purely-odd symplectic supermanifolds $\bR^{0|n}$. As we explain, Hamiltonian reductions quantize to bimodules, which are often Morita equivalences. In particular, we will show that the well-known Morita equivalence $\bH\simeq \Cliff(4)$ is the quantization of the Hamiltonian reduction $\bR^{0|4} // \Spin(3)$, where $\Spin(3) = \mathrm{SU}(2)$ acts on $\bR^{0|4}$ as the underlying real module of the def\/ining action of $\mathrm{SU}(2)$ on $\bC^2$, and that the reduction $\bR^{0|8} // \Spin(7)$ coming from the spin representation quantizes to the ``Bott periodicity'' Morita equivalence $\Cliff(8) \simeq \bR$. We also show that the Morita equivalence $\Cliff(7) \simeq \Cliff(-1)$ arises from the Hamiltonian reduction $\bR^{0|7} // G_2$, where $G_2 \subseteq \mathrm{SO}(7)$ is the exceptional Lie group of automorphisms of the octonion algebra $\bO$.

\section{Symplectic supermanifolds and Clif\/ford algebras} \label{section super}

A \textit{superalgebra} is a $\bZ/2$-graded associative algebra (meaning, in particular, that the multiplication adds degree modulo $2$); morphisms are grading-preserving. A~\textit{supermodule} is a $\bZ/2$-graded module. If $M$ is a left $A$-supermodule, the algebra $\End_A(M)$ of all $A$-linear endomorphisms of~$M$ is naturally a superalgebra acting on $M$ from the right (with multiplication $fg = g\circ f$). Two superalgebras $A$ and $B$ are \textit{super Morita equivalent} if there are $\bZ/2$-graded bimodules~$_AM_B$ and~$_BN_A$ with grading-preserving bimodule isomorphisms $M\otimes_B N \cong A$ and $N \otimes_A M \cong B$. We will generally suppress the word ``super'': for example, ``module'' and ``Morita equivalence'' will henceforth always be meant in the super sense.

A superalgebra $A$ is \textit{commutative} if for homogeneous elements $x$ and $y$ (with degrees~$|x|$ and~$|y|$), $yx = (-1)^{|x|\cdot|y|}xy$. Note in particular that for an odd element $x$ in a commutative superalgebra, $x^2 = - x^2$, and so $x^2=0$. By def\/inition, \textit{odd $n$-dimensional space} $\bR^{0|n}$ is the ``spectrum'' of the commutative superalgebra $\cO(\bR^{0|n}) = \bR[x_1,\dots,x_n]$ where the \textit{coordinate functions} $x_1,\dots,x_n$ are odd, and $\bR[\dots]$ denotes the free commutative superalgebra on ``$\dots$''. Thus $\cO(\bR^{0|n}) \cong \bigwedge^\bullet \bR^n$. In general, a \textit{supermanifold} is a ``space'' that looks locally like $\bR^{m|n} = \bR^m \times \bR^{0|n}$. See~\cite{DM1999} for details on superalgebras and supermanifolds.

\looseness=-1 A \textit{symplectic structure} on a supermanifold is an even nondegenerate closed de Rham 2-form. De Rham forms can be def\/ined for commutative superalgebras just like for commutative algebras, but behave dif\/ferently in one important way: if $x$ is an odd coordinate, then $\d x$ is even, and so $\d x \wedge \d x \neq 0$, and if $x$ and $y$ are both odd, then $\d x \wedge \d y = \d y \wedge \d x$ with no sign. A side ef\/fect of this is that symplectic structures on odd manifolds behave somewhat like metrics on even manifolds.

Equip $\bR^{0|n}$ with the positive-def\/inite symplectic form $\omega = \sum_i \frac{(\d x_i)^2}2$. The corresponding Poisson structure on $\bR^{0|n}$ is given by the Poisson brackets $\{x_i,x_j\} = -2\delta_{ij}$. (The sign depends on an essentially-arbitrary choice of convention for inverse matrices in superalgebra.) The symplectic form $\omega$ on $\bR^{0|n}$ is translation-invariant and so admits a \textit{canonical quantization} to the Weyl algebra $\cW(\bR^{0|n}) = \bR\langle x_1,\dots,x_n\rangle / ([x_i,x_j] = \{x_i,x_j\})$, where by def\/inition in a superalgebra the {commutator} is def\/ined on homogeneous elements by $[x,y] = xy - (-1)^{|x|\cdot|y|}yx$. Thus~$\cW(\bR^{0|n})$ is the \textit{Clifford algebra} $\Cliff(n) = \bR\langle x_1,\dots,x_n\rangle / (x_i^2 = -1, x_ix_j = -x_jx_i \text{ for }i\neq j)$ with its usual $\bZ/2$-grading in which all generators $x_i$ are odd.
The Weyl algebra of $\bR^{0|n}$ equipped with symplectic form $-\omega$ is $\Cliff(-n) = \bR\langle x_1,\dots,x_n\rangle / (x_i^2 = 1, x_ix_j = -x_jx_i)$.

\section{Quantum Hamiltonian reduction} \label{section reduction}

A \textit{moment map} for the action of a super Lie group $G$ on a symplectic supermanifold $M$ is a~map $\mu \colon M \to \g^* = \mathrm{Lie}(G)^*$ of Poisson supermanifolds such that the inf\/initesimal action of an element $a\in \g$ is given by the Hamiltonian vector f\/ield for the function $m \mapsto \langle \mu(m),a\rangle$, where~$\langle\,,\,\rangle$ denotes the pairing of the vector space $\g$ with its dual; such data is equivalent to a Lie algebra map $\mu^*\colon \g \to \cO(M)$, where the latter is treated as a super Lie algebra with its Poisson bracket. When certain cohomology groups of $M$ and $\g$ vanish, $\mu$ exists and is unique. The \textit{Hamiltonian reduction} $M // G$ of this data is the quotient space $\mu^{-1}(0) / G$. This can be def\/ined in the super case via its algebra of functions $(\cO(M)/\langle \mu^*\g\rangle)^G$, where $\langle \mu^*\g\rangle$ denotes the ideal generated by the image of $\mu^*$. As Marsden and Weinstein explained in the even case~\cite{MR0402819}, when $0$ is a regular value of $\mu$ and the action of $G$ on $\mu^{-1}(0)$ is free and proper, the manifold $M // G$ is naturally symplectic. The natural maps $\mu^{-1}(0) \mono M$ and $\mu^{-1}(0) \epi M//G$ are together a Lagrangian correspondence between $M$ and $M//G$. Super Hamiltonian reduction can be cleanly expressed as an example of coisotropic reduction of super Poisson algebras \cite{MR3027582, MR2795841}.

\looseness=-1 Suppose that $G$ acts instead on an associative superalgebra $A$. A \textit{comoment map} is a Lie algebra map $\mu^* \colon \g \to A$, where $A$ is treated as a super Lie algebra with its commutator bracket, such that the inf\/initesimal action of $a \in \g$ is given by the inner derivation $[\mu^*(a),-]$. Corresponding to the zero section $\mu^{-1}(0)$ is the quotient module $A / \langle \mu^*\g\rangle$, where $\langle \mu^*\g\rangle$ denotes the left ideal generated by the image of $\mu^*$. Corresponding to the quotient $M // G = \mu^{-1}(0)/G$ is the \textit{quantum Hamiltonian reduction} $A//G = (A/\langle \mu^*\g\rangle)^G$. This is naturally an algebra because it is isomorphic to $\End_A(A/\langle \mu^*\g\rangle) $. When $G$ is compact, $A//G \cong A^G / \bigl( A^G \cap \langle \mu^*\g\rangle\bigr)$, where $A^G$ denotes the $G$-invariant subalgebra of $A$. From this perspective, the algebra structure on $A//G$ arises because, although $\langle \mu^*\g\rangle$ is merely a left ideal in $A$, its intersection with $A^G$ is a two-sided ideal, as~$\mu^*\g$ is central in~$A^G$. The module $A / \langle\mu^*\g\rangle$ is by construction a bimodule between~$A$ and~$A//G$.

\begin{example}\label{linear lagrangians}
 Suppose that $M$ is a linear symplectic supermanifold and $C \subseteq M$ is a coisotropic submanifold cut out by linear equations $r_1 = \dots = r_{p+q} = 0$, where $r_1,\dots, r_p$ are even and $r_{p+1},\dots,r_{p+q}$ are odd. The Hamiltonian f\/lows for $r_1,\dots,r_{p+q}$ def\/ine an action on $M$ of the abelian Lie supergroup $\bR^{p|q}$. Let $C^\perp \subseteq C$ denote the symplectic orthogonal to~$C$. The Hamiltonian reduction $M // \bR^{p|q}$ is then canonically linearly symplectomorphic to $C/C^\perp$.

\looseness=-1 Since $M$ is linear, it admits a canonical quantization to the Weyl algebra $\cW(M) {=} \cT(M^*) {/} ([a{,}b]\!$ $= \{a,b\}, a,b\in M^*)$. The quotient $\cW(C) = \cW(M) / \langle r_1,\dots,r_c\rangle$ is the canonical quantization of~$C$, and $\cW(M) // \bR^{p|q} \cong \cW(C / C^\perp)$. In the purely-odd case, which is the only case of concern in this paper, $\cW(C)$ is a Morita equivalence between $\cW(M)$ and $\cW(C/C^\perp)$: it suf\/f\/ices to consider the case $M =\bR^{0|2}$ with ``split'' symplectic form $\frac{(\d x)^2}2 - \frac{(\d y)^2}2$ and Lagrangian $C \cong \bR^{0|1}$ spanned by the vector $(1,1)$; then $\cW(M) \cong \mathrm{Mat}(1|1)$ is the algebra of $2\times 2$ matrices in which $\bigl(\begin{smallmatrix} 1 & 0 \\ 0 & 0\end{smallmatrix}\bigr)$ and $\bigl(\begin{smallmatrix} 0 & 0 \\ 0 & 1\end{smallmatrix}\bigr)$ are even and $\bigl(\begin{smallmatrix} 0 & 1 \\ 0 & 0\end{smallmatrix}\bigr)$ and $\bigl(\begin{smallmatrix} 0 & 0 \\ 1 & 0\end{smallmatrix}\bigr)$ are odd, and $\cW(C)$ is the def\/ining $(1|1)$-dimensional module.

(When there are even coordinates, $\cW(C)$ is not a Morita equivalence. The Stone--von Neumann theorem can be understood as saying that for purely even $M$, $\cW(C)$ becomes a Morita equivalence after appropriate functional analytic completions. The mixed case can be handled by decomposing $M$ and $C$ into even and odd parts.)

In particular, linear Lagrangians provide Morita equivalences $\cW(M) \simeq \bR$. This does not explain why $\Cliff(8) = \cW(\bR^{0|8}) \simeq \bR$, because the positive-def\/initeness of the symplectic form prevents $\bR^{0|n}$ from admitting Lagrangian sub-supermanifolds, linear or not.
\end{example}

\begin{lemma}\label{non-zero suffices} If the Hamiltonian reduction $\Cliff(n) // G$ is not the zero algebra, then $\Cliff(n) / \langle \mu^*\g\rangle$ is a Morita equivalence between $\Cliff(n)$ and $\Cliff(n) // G$.
\end{lemma}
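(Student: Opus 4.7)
The strategy is to exploit the classical fact that $\Cliff(n)$ is a super-central-simple super-algebra over $\bR$, so that every non-zero super-module over it is automatically a progenerator. The hypothesis $\Cliff(n)//G \neq 0$ supplies the required non-vanishing: writing $M := \Cliff(n)/\langle \mu^*\g\rangle$, we have $\Cliff(n)//G = M^G \subseteq M$, so $M^G \neq 0$ immediately forces $M \neq 0$.

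Next I would record (or verify by inspection of the presentation) that $\Cliff(n)$ has no proper non-zero graded two-sided ideals and has super-center $\bR$. Super-simplicity gives two things. First, the annihilator of the non-zero super-module $M$ is a graded two-sided ideal and therefore zero, so $M$ is faithful. Second, by finite-dimensionality, $\Cliff(n)$ is super-semisimple, so $M$ is projective. For $n\geq 1$ the generator $x_1$ is odd and invertible (since $x_1^2=-1$), and left-multiplication by it intertwines any simple super-module with its parity shift; hence $\Cliff(n)$ has a unique simple super-module up to super-isomorphism. Consequently any non-zero super-module is a direct sum of copies of this simple, and in particular $M$ is a generator as well as projective, i.e.\ a progenerator.

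Standard super-Morita theory then delivers the conclusion: a progenerator $M$ over a super-algebra $A$ implements a Morita equivalence between $A$ and $\End_A(M)$. Applied with $A=\Cliff(n)$ and combined with the identification $\End_{\Cliff(n)}(\Cliff(n)/\langle \mu^*\g\rangle) \cong \Cliff(n)//G$ established in Section~\ref{section reduction}, this is exactly the statement of the lemma.

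I do not anticipate any serious obstacle, since every ingredient is classical; the one point requiring mild care is the parity bookkeeping inherent to super-Morita theory, which is precisely what the $x_1$-intertwiner above handles. The lemma is essentially the combination of super-simplicity of $\Cliff(n)$ with the general principle that non-zero modules over (super-)simple (super-)algebras give (super-)Morita equivalences with their endomorphism algebras.
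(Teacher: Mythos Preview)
Your overall strategy---using that $\Cliff(n)$ is graded-central-simple to conclude that any non-zero finite-dimensional super-module is a progenerator---is sound, and it is a genuinely different, more purely algebraic route than the paper's, which instead complexifies and invokes the linear coisotropics of Example~\ref{linear lagrangians} to reduce the question to $\bC$ or $\Cliff(1)\otimes\bC$ by Morita invariance. However, the step you use to handle the parity bookkeeping is incorrect. Left multiplication by $x_1$ on a left $\Cliff(n)$-module $S$ is \emph{not} $\Cliff(n)$-linear (that would require $x_1$ to be central, which fails for $n\geq 2$), so it does not furnish an isomorphism between $S$ and its parity shift $\Pi S$. In fact the conclusion you draw---that $\Cliff(n)$ has a unique simple super-module up to even isomorphism---is false in general: for example $\Cliff(4) \cong \mathrm{Mat}_{1|1}(\bH)$ has simple module $S = \bH^{1|1}$ with $\End_{\Cliff(4)}(S) \cong \bH$ purely even, whence $S \not\cong \Pi S$. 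The same occurs for every $n \equiv 0$ or $4 \pmod 8$, precisely the cases where the super division algebra in the super-Brauer class of $\Cliff(n)$ is purely even.

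The fix is to remember that ``generator of the supercategory'' is tested against the full super-$\mathrm{Hom}$ (odd maps included), which is the notion relevant to the paper's definition of super Morita equivalence. Graded simplicity still gives a unique simple $S$ up to parity shift, and any non-zero $M \cong S^{\oplus a} \oplus (\Pi S)^{\oplus b}$ then admits a non-zero (possibly odd) $\Cliff(n)$-linear map to every simple; combined with projectivity (from graded semisimplicity) and finite generation (from finite-dimensionality) this makes $M$ a progenerator. So your argument is reparable, but as written the $x_1$-intertwiner step is a genuine gap rather than the ``mild care'' you anticipated.
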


\begin{proof}For any superalgebra $A$, an $A$-module $X$ is a Morita equivalence between $A$ and $\End_A(X)$ if and only if $X$ is a f\/initely-generated projective generator of the supercategory of $A$-modules. The holomorphic symplectic supermanifold $\bC^{0|n} = \bR^{0|n} \otimes \bC$ admits a linear Lagrangian $L$ if $n$ is even and an $(n+1)/2$-dimensional linear coisotropic $C$ if $n$ is odd. Via Example~\ref{linear lagrangians}, these linear coisotropics provide Morita equivalences $\Cliff(n) \otimes \bC \simeq \bC = \cW(L/L^\perp)$ or $\Cliff(1) \otimes \bC = \cW(C/C^\perp)$. For $\bC$ and $\Cliff(1) \otimes \bC$, any non-zero f\/initely-generated module is a projective generator. But ``non-zero'', ``f\/initely-generated'', and ``projective'' are Morita-invariant notions, so these properties hold also for $\Cliff(n)\otimes \bC$ and hence for $\Cliff(n)$.
\end{proof}

\section[$\Cliff(4)$ and $\bH$]{$\boldsymbol{\Cliff(4)}$ and $\boldsymbol{\bH}$}

Corresponding to the exceptional isomorphism $\mathrm{SO}(4) \cong \Spin(3) \times_{\bZ/2} \Spin(3)$ are two commuting actions of $\Spin(3)$ on $\bR^{0|4}$ by linear symplectic automorphisms. (Odd symplectic groups are even orthogonal groups; metaplectic groups correspond to spin groups.) Denote the coordinates on $\bR^{0|4}$ by $\{w,x,y,z\}$ and the bases for two copies of $\so(3)$ by $\{a_+,b_+,c_+\}$ and $\{a_-,b_-,c_-\}$, normalized so that their brackets are $[a_\pm,b_\pm] = \pm 2c_\pm$, $[b_\pm,c_\pm] = \pm 2a_\pm$, $[c_\pm,a_\pm]=\pm 2b_\pm$. The comoment maps for the actions are:
\begin{gather*}
 a_\pm \mapsto \tfrac12(wx\pm yz), \qquad b_\pm \mapsto \tfrac12(wy\pm zx), \qquad c_\pm \mapsto \tfrac12(wz\pm xy).
\end{gather*}
Together these six elements are a basis for the space of homogeneous-quadratic functions on~$\bR^{0|4}$. The quadratic Casimir for both $\so(3)$s is $\theta = \pm 2a_\pm^2= \pm 2b_\pm^2=\pm 2c_\pm^2= wxyz$. Completing the basis for $\cO(\bR^{0|4})$ are the unit $1$ and $xyz$, $wyz$, $wzx$, and $wxy$. Basis vectors $1$, $a_\pm$, $b_\pm$, $c_\pm$, and~$\theta$ are even, and $x$, $y$, $z$, $w$, $xyz$, $wyz$, $wzx$, and $wxy$ are odd.

We now consider the quantization $\Cliff(4) = \cW(\bR^{0|4})$, for which we can use the same basis $\{1,w,x,y,z,a_+,b_+,c_+,a_-,b_-,c_-,xyz,wyz,wzx,wxy,\theta\}$ (with the same grading). Note that, whereas in $\cO(\bR^{0|4})$ we had $a^2_\pm = b^2_\pm = c^2_\pm =\pm \theta/2$, in $\Cliff(4)$ we have $ a^2_\pm = b^2_\pm = c^2_\pm = \frac12(\pm \theta-1)$. Since the actions of $\mathrm{Spin}(3)$ on $\bR^{0|4}$ are linear, they lift to $\Cliff(4)$, and the same assignments $a_+,\dots,c_-$ provide the quantum comoment maps.

\begin{theorem} \label{theoremH}
 The quantum Hamiltonian reduction of either of the $\so(3)$-actions on $\bR^{0|4}$ produces a Morita equivalence $\Cliff(4)\simeq \bH = \bR\langle i,j,k\rangle / (i^2 = j^2 = k^2 = ijk = -1)$ $($where the quaternion algebra $\bH$ is purely even$)$.
\end{theorem}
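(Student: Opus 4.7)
The plan is to invoke Lemma~\ref{non-zero suffices}, reducing the task to two subgoals: compute $\Cliff(4)//\Spin(3)$ as an abstract algebra and verify it is nonzero. By the obvious symmetry exchanging the ``$+$'' and ``$-$'' actions, I work only with the action generated by $\{a_+, b_+, c_+\}$.

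First I would determine $\Cliff(4)^{\Spin(3)_+}$ by representation theory. As a $\Spin(3)_+ \times \Spin(3)_-$-module the underlying vector space of $\Cliff(4)$ is $\bigwedge^\bullet \bR^4$, where $\bR^4$ is the tensor product of the defining representations of the two factors. The standard decomposition $\bigwedge^2(V_+\otimes V_-) \cong \so(3)_+ \oplus \so(3)_-$, together with the observation that $V_+\otimes V_-$ and its Hodge dual contain no $\Spin(3)_+$-invariants, shows that $\Cliff(4)^{\Spin(3)_+}$ is five-dimensional with basis $\{1, a_-, b_-, c_-, \theta\}$.

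Next I would identify the quotient by the invariant ideal. Since $a_+^2 = \tfrac12(\theta-1)$ is itself $\Spin(3)_+$-invariant and visibly lies in $\langle\mu^*\g\rangle$, we have $\theta - 1 \in \Cliff(4)^{\Spin(3)_+} \cap \langle\mu^*\g\rangle$, so $\theta \equiv 1$ in the reduction. Substituting $\theta = 1$ into $a_-^2 = -\tfrac12(\theta+1)$ and its analogues gives $a_-^2 = b_-^2 = c_-^2 = -1$. A direct Clifford computation produces $a_-b_- = -c_-$, which combined with $[a_-, b_-] = -2c_-$ forces $b_-a_- = c_-$; hence $a_-$ and $b_-$ anticommute and each squares to $-1$. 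The subalgebra they generate is therefore a homomorphic image of $\bH$, with $c_- = -a_-b_-$ corresponding to $-k$.

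Finally I would rule out that this image is zero; since $\bH$ is simple, the only alternative is that the image equals $\bH$. For this I would exhibit a nonzero $\Cliff(4)$-module on which $a_+, b_+, c_+$ act by zero. The ungraded simple $\Cliff(4)$-module is the Dirac spinor $\bH^2 = S_+ \oplus S_-$; under the Lie algebra embedding $\so(4) \subset \Cliff(4)^{\mathrm{ev}}$, the two half-spinors $S_\pm$ are exactly the summands on which one of the two factors $\so(3)_\pm$ acts by the defining representation while the other acts trivially. Thus $\so(3)_+$ annihilates one of the two half-spinors, and any nonzero vector there witnesses $1 \notin \langle\mu^*\g\rangle$, so the reduction is nonzero. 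The expected main obstacle is precisely this last step: after the algebra is narrowed down to $\bH$ by representation theory, one must still prevent it from collapsing to $0$, and the cleanest argument appeals to the standard but nontrivial fact that one chirality of the Dirac spinor of $\Spin(4)$ is trivialized by one of the two $\so(3)$-factors of $\so(4)$.
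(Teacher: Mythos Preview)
Your argument is correct and follows the same overall architecture as the paper's proof (compute the $\Spin(3)$-invariants, identify the quotient with $\bH$, then invoke Lemma~\ref{non-zero suffices}), but the tactics in the last two steps are genuinely different. The paper, working with the ``$-$'' action, writes down the entire eight-dimensional left ideal $\langle a_-,b_-,c_-\rangle$ explicitly, reads off that its intersection with the five-dimensional invariant subalgebra is the single line $\bR(\theta+1)$, and hence obtains a four-dimensional quotient which is visibly $\bH$; non-vanishing comes for free from the dimension count. You instead only exhibit the single invariant element $\theta-1 = 2a_+^2$ in the ideal, deduce that the reduction is a quotient of $\bH$, and then rule out the zero quotient by producing a vector in the Dirac module $\bH^2$ annihilated by $\mu^*\so(3)_+$---namely any vector in the half-spinor on which $\theta$ acts by $+1$, where $a_+,b_+,c_+$ (living in the $\theta=-1$ eigenspace of $\Cliff(4)^{\mathrm{ev}}$) necessarily act as zero. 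Your route trades the paper's elementary but explicit ideal computation for a cleaner representation-theoretic input (the standard fact that each $\so(3)$-factor of $\so(4)$ kills one chirality), and the simplicity of $\bH$ does the rest; both approaches are short, and yours arguably generalizes more readily to the later cases where enumerating the full left ideal by hand would be unpleasant.
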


\begin{proof}
There is a manifest symmetry interchanging the two $\so(3)$-actions; we will work with the action of the $ a_-$, $b_-$, and $c_-$. It is not hard to see that
$[a_-,-]$, $[b_-,-]$, and $[c_-,-]$ preserve polynomial degree. We have observed already that the quadratic elements $a_+$, $b_+$, $c_+$ commute with the generators $a_-$, $b_-$, $c_-$ of the action, as well as with $\theta = wxyz$. The subspace of $\Cliff(4)$ spanned by $\{a_-,b_-,c_-\}$ is a submodule for the action of $\{a_-,b_-,c_-\}$ isomorphic to the adjoint action. The subspaces spanned by $\{w,x,y,z\}$ and $\{xyz,wyz,wzx,wxy\}$ are each isomorphic to the underlying real module of the def\/ining module of $\mathfrak{su}(2)$. Thus a basis for the $\so(3)$-f\/ixed subalgebra $\Cliff(4)^{\Spin(3)}$ is given by the f\/ive even elements $\{1,a_+,b_+,c_+,\theta\}$.

The left ideal $\langle \mu^*\so(3)\rangle$ in $\Cliff(4)$ generated by $\{a_-,b_-,c_-\}$ is eight-dimensional with basis $\{ a_-, b_-, c_-, w- xyz, x+wyz, y+wzx, z+wxy, \theta + 1\}$. This ideal intersects $\Cliff(4)^{\Spin(3)}$ only in the one-dimensional space spanned by $\theta + 1$. It follows that in the quotient $\Cliff(4)^{\Spin(3)}/$ $\bigl( \Cliff(4)^{\Spin(3)} \cap \langle \mu^*\so(3) \rangle\bigr)$ we have $a_+^2 = \frac12(\theta-1) \equiv \frac12(-2) = -1$, and so the map $(a_+,b_+,c_+) \mapsto (i,j,k)$ identif\/ies the quantum Hamiltonian reduction $\Cliff(4)//\Spin(3)$ with the quaternion algebra~$\bH$. Lemma~\ref{non-zero suffices} completes the proof.
\end{proof}

\looseness=-1 Theorem~\ref{theoremH} suggests that the ``classical limit'' of $\bH$ is the Hamiltonian reduction $\bR^{0|4} // \Spin(3)$. Since $0$ is not a regular value of the classical moment map, $\bR^{0|4} // \Spin(3)$ is not a supermanifold. It does make sense as an af\/f\/ine super scheme: its algebra of functions is the purely even Poisson algebra $\bR[a,b,c] / (a^2 = b^2 = c^2 = ab = bc= ca = 0)$ with Poisson brackets $\{a,b\} = 2c$, $\{b,c\} = 2a$, and $\{c,a\} = 2b$. Thus the ``classical limit'' of $\bH$ is the f\/irst-order neighborhood of~$0$ in~$\so(3)^*$.

\section[$\Cliff(7)$ and $G_2$]{$\boldsymbol{\Cliff(7)}$ and $\boldsymbol{G_2}$}

The 14-dimensional exceptional Lie group $G_2$ is the subgroup of $\mathrm{SO}(7)$ preserving the alter\-na\-ting 3-form $\epsilon$ on $\bR^7$ def\/ined by identifying $\bR^7$ with the pure-imaginary octonions and setting $\epsilon(a,b,c) = (ab)c - a(bc) \in \bR$~\cite{MR2282011}. Since $\mathrm{SO}(7)$ acts by linear symplectic automorphisms of $\bR^{0|7}$, we get an induced symplectic action of~$G_2$.

\begin{theorem}\label{theoremG}
 The quantum Hamiltonian reduction $\Cliff(7) // G_2$ provides the Morita equivalence $\Cliff(7) \simeq \Cliff(-1)$.
\end{theorem}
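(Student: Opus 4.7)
The plan is to parallel the proof of Theorem~\ref{theoremH}: compute $\Cliff(7)^{G_2}$, identify the invariants lying in the ideal $\langle\mu^*\g_2\rangle$, show the quotient is $\Cliff(-1)$, and invoke Lemma~\ref{non-zero suffices}. As a $\Spin(7)$-module, $\Cliff(7) \cong \bigwedge^\bullet\bR^7$, and the standard $G_2$-decomposition of the exterior powers of $\bR^7$ has trivial summands exactly in $\bigwedge^0$, $\bigwedge^3$, $\bigwedge^4$, $\bigwedge^7$ (each with multiplicity one), while the adjoint representation $\g_2$ appears with multiplicity exactly two (once in $\bigwedge^2$ and once in $\bigwedge^5$). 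Thus $\Cliff(7)^{G_2}$ is four-dimensional with basis $\{1, \hat\epsilon, \widehat{*\epsilon}, \omega\}$, where $\omega = x_1\cdots x_7$, $\hat\epsilon$ is the canonical quantization of the $G_2$ three-form $\epsilon$, and $\widehat{*\epsilon}$ is that of its Hodge-dual four-form; the elements $1$ and $\widehat{*\epsilon}$ are even while $\hat\epsilon$ and $\omega$ are odd.

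The central observation is that $\omega$ lies in the ordinary center of $\Cliff(7)$ (since $n=7$ is odd), satisfies $\omega^2 = (-1)^{7\cdot 6/2}(-1)^7 = 1$, and is $G_2$-invariant. Hence $\omega$ generates a copy of $\Cliff(-1) = \bR\langle e\rangle/(e^2=1)$ inside $\Cliff(7)^{G_2}$, giving a superalgebra map $\Cliff(-1) \to \Cliff(7)//G_2$, $e\mapsto\omega$. It suffices to show this map is an isomorphism; by dimension count this reduces to showing $\dim\bigl(\Cliff(7)^{G_2}\cap\langle\mu^*\g_2\rangle\bigr) = 2$. Schur's lemma applied to the $G_2$-equivariant multiplication $\mu^*\g_2 \otimes \Cliff(7) \to \Cliff(7)$ bounds this dimension above by $\dim\mathrm{Hom}_{G_2}(\g_2, \Cliff(7)) = 2$, so it is enough to exhibit two linearly independent $G_2$-invariants in the ideal.

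The first is the quantum Casimir $C = \sum_{a,b}\kappa^{ab}\mu^*(g_a)\mu^*(g_b)$, an even invariant in the ideal. Its principal symbol is the classical Casimir, a $G_2$-invariant element of $\bigwedge^4\bR^7$; since $(\bigwedge^4\bR^7)^{G_2}$ is one-dimensional and spanned by $*\epsilon$, this symbol is a scalar multiple of $*\epsilon$, and one checks it is nonzero, giving $C = \alpha + \beta\widehat{*\epsilon}$ with $\beta\neq 0$. The second is $\omega C$: it lies in the ideal because $\omega$ is central, is odd, and equals $\alpha\omega + \beta\omega\widehat{*\epsilon}$. In Clifford algebra, left multiplication by $\omega$ sends a length-four monomial with distinct indices to (up to sign) the complementary length-three monomial, so $\omega\widehat{*\epsilon}$ is a pure odd $G_2$-invariant of Clifford filtration three, hence a nonzero multiple of $\hat\epsilon$. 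The elements $C$ and $\omega C$ are linearly independent by parity, saturating the Schur bound, so $\Cliff(7)//G_2$ is two-dimensional and identified with $\Cliff(-1)$. Lemma~\ref{non-zero suffices} then yields the Morita equivalence.

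The main obstacle is verifying that $\beta\neq 0$, i.e.\ that the classical Casimir $\sum\kappa^{ab}g_a\wedge g_b \in \bigwedge^4\bR^7$ is genuinely a nonzero multiple of $*\epsilon$ rather than zero. The analogous check for Theorem~\ref{theoremH} is trivial (one computes $a_-^2+b_-^2+c_-^2 = -\tfrac32\theta-\tfrac32$), but for $G_2\subset\mathrm{SO}(7)$ the calculation is more involved and most cleanly organized by working in the octonionic basis $\bR^7 = \mathrm{Im}(\bO)$, where $\epsilon$ is the associator of the octonion product.
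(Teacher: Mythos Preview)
Your argument is correct and takes a genuinely different route from the paper's proof. The paper works entirely by explicit Clifford-algebra manipulation: it writes down a concrete basis for $\mu^*\g_2$, produces the invariant $7\theta+\epsilon$ (in your notation $7\omega+\hat\epsilon$) in the left ideal by a direct computation, and then shows $1\notin\langle\mu^*\g_2\rangle$ by exhibiting an element $\theta-\epsilon$ that is annihilated on the left by every generator. Your approach replaces both steps with representation-theoretic structure: the Schur bound $\dim\bigl(\langle\mu^*\g_2\rangle^{G_2}\bigr)\le\dim\mathrm{Hom}_{G_2}(\g_2,\Cliff(7))=2$ simultaneously caps the invariant part of the ideal and guarantees nontriviality of the quotient as soon as two independent invariants in the ideal are produced, and the quantum Casimir together with its $\omega$-multiple supplies those invariants canonically. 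This is more conceptual and portable---essentially the same schematic would handle $\Cliff(8)//\Spin(7)$---whereas the paper's explicit computation is self-contained and does not invoke the $G_2$-decomposition of $\bigwedge^\bullet\bR^7$.

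The only genuine gap is the one you flag: that the quartic symbol of the Casimir is a \emph{nonzero} multiple of $*\epsilon$. This should be closed rather than deferred, since if $\beta=0$ then $C$ is a nonzero scalar and the reduction collapses to zero. A short way to fill it: in $\bigwedge^2\bR^7=\g_2\oplus\bR^7$ the full $\so(7)$-Casimir $\sum_{i<j}x_{ij}\wedge x_{ij}$ vanishes identically in $\bigwedge^4$, so the $\g_2$-Casimir's quartic part equals $-\tfrac13\sum_i e_i\wedge e_i$ with $e_i=\partial_i\epsilon$; each $e_i\wedge e_i$ is twice the sum of the three monomials of $*\epsilon$ whose index set omits $i$, and summing over $i$ hits every monomial of $*\epsilon$ exactly three times (with coherent sign, as one checks on a single monomial such as $x_{4567}$). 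Hence $\beta\neq 0$ and your argument goes through.
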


\begin{proof}
By Lemma~\ref{non-zero suffices}, it suf\/f\/ices to compute $\Cliff(7) // G_2$. As in Theorem~\ref{theoremH}, the Poincar\'e--Birkof\/f--Witt isomorphism $\cO(\bR^{0|7}) = \bigwedge^\bullet(\bR^7) \cong \Cliff(7)$ is $\mathrm{SO}(7)$-equivariant by the functoriality of the Weyl algebra construction. The $G_2$-f\/ixed algebra has as its basis a set of the form $\{1,\epsilon,\bar\epsilon,\theta\}$, where $\epsilon$ is the cubic 3-form def\/ining $G_2$, $\bar\epsilon$ is its dual quartic, and $\theta$ is the generator of $\bigwedge^7 \bR^7$. In $\Cliff(7)$, $\bar\epsilon = \theta\epsilon$ and $\theta^2=1$.

An explicit presentation of the action of $\mathrm{Lie}(G_2) = \g_2$ is given in \cite{ArenasG2} as follows. Denote the coordinates on $\bR^{0|7}$ by $x_1,x_2,\dots,x_7$. The cubic function $\epsilon$ is
\begin{gather*}
\epsilon = x_1x_2x_3 + x_1x_4x_5 + x_1x_6x_7 + x_2x_4x_6 + x_2x_7x_5 + x_3x_7x_4 + x_3x_6x_5.
\end{gather*}
Consider the quadratic functions $e_1,\dots,e_7$ def\/ined by $e_i = \frac{\partial}{\partial x_i}\epsilon$. For example, $e_1 = x_2x_3 + x_4x_5 + x_6x_7$.
 Orthogonal to each $e_i$ is a two-dimensional vector space of commuting quadratics given by the dif\/ferences of the monomials in $e_i$. For example, orthogonal to $e_1$ are $x_2x_3 - x_4x_5$, $x_4x_5 - x_6x_7$, and their sum $x_2x_3 - x_6x_7$. A basis for the image of $\g_2$ under $\mu^*$ is given by choosing for each $i = 1,\dots,7$ two quadratics orthogonal to $e_i$. For example:
 \begin{gather*}
 x_2x_3 - x_4x_5, \qquad x_4x_5 - x_6x_7, \qquad x_3x_1 - x_4x_6, \qquad x_4x_6 - x_7x_5, \qquad x_1x_2 - x_7x_4, \\
 x_7x_4 - x_6x_5, \qquad x_5x_1 - x_6x_2, \qquad x_6x_2 - x_3x_7, \qquad x_1x_4 - x_2x_7, \qquad x_2x_7 - x_3x_6, \\
 x_7x_1 - x_2x_4, \qquad x_2x_4 - x_5x_3, \qquad x_1x_6 - x_5x_2, \qquad x_5x_2 - x_4x_3.
 \end{gather*}
We wish to compute $\End_{\Cliff(7)}(\Cliff(7)/\langle \mu^*\g_2\rangle) = \Cliff(7)^{G_2} / (\langle \mu^*\g_2\rangle \cap \Cliff(7)^{G_2})$, where $\langle \mu^*\g_2\rangle$ is the left ideal generated by these $14$ elements.

Note that $x_1x_4x_5x_6x_7 (x_2x_3 - x_4x_5) = \theta + x_1x_6x_7$, where $\theta = x_1x_2\cdots x_7 \in \Cliff(7)^{G_2}$. The numerics of the second summand are: $x_2x_3-x_4x_5$ was orthogonal to $e_1$; $x_6x_7$ is the unused monomial in $e_1$. Similarly, for each monomial $\mu$ in the cubic $\epsilon$ one can f\/ind $\theta + \mu \in \langle \mu^*\g_2\rangle$, and summing shows that $7\theta + \epsilon \in \langle \mu^*\g_2\rangle \cap \Cliff(7)^{G_2}$; hence also $7+\bar\epsilon \in \langle \mu^*\g_2\rangle \cap \Cliff(7)^{G_2}$. It follows that $\Cliff(7)^{G_2} / (\langle \mu^*\g_2\rangle \cap \Cliff(7)^{G_2})$ is a quotient of the copy of $\Cliff(-1)$ spanned by the classes of $1$ and $\theta$.

Finally, for any basis element $\alpha \in \mu^*\g_2$, we have $\alpha(\theta - \epsilon) = 0$, from which it follows that $1 \not\in \langle \mu^*\g_2\rangle$. The ideal cannot mix even and odd terms without setting both to $0$, and so we f\/ind $\Cliff(7)^{G_2} / (\langle \mu^*\g_2\rangle \cap \Cliff(7)^{G_2}) \cong \Cliff(-1)$.
\end{proof}

\section[$\Spin(7)$ and Bott periodicity]{$\boldsymbol{\Spin(7)}$ and Bott periodicity}

{\sloppy We conclude by providing a Hamiltonian reduction whose quantization is the famous ``Bott pe\-rio\-dicity'' equivalence $\Cliff(8) \simeq \bR$. The irreducible real spin representations of all four groups $\Spin(5)$, $\Spin(6)$, $\Spin(7)$, and $\Spin(8)$ are eight-real-dimensional. The reduction $\Cliff(8) // \Spin(8)$ vanishes since the image of the comoment map consists of all quadratic elements of $\Cliff(8)$, including some which are invertible, and so the $\Spin(8)$-action does not induce a Morita equivalence. The reader is invited to compute $\Cliff(8) // \Spin(5)$ and $\Cliff(8) // \Spin(6)$. We will show:

}

\begin{theorem}\label{theorem8}
 $\Cliff(8) // \Spin(7) \cong \bR$.
\end{theorem}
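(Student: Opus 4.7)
The plan is to apply Lemma~\ref{non-zero suffices} together with a short representation-theoretic dimension count, bypassing the basis-level manipulations used in the proofs of Theorems~\ref{theoremH} and~\ref{theoremG}. Writing $A = \Cliff(8) \cong \End_{\bR}(V)$, where $V \cong \bR^{16}$ is the unique simple supermodule, the left ideals of $A$ are classified by subspaces of $V$: for any subset $S \subseteq A$ one has $AS = \{T \in \End(V) : T|_{W} = 0\}$ where $W := \bigcap_{M \in S} \ker M$, so this left ideal has codimension $\dim(V) \cdot \dim(W)$.

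I would apply this observation with $S = \mu^*\g$. Up to normalisation, the comoment map is the standard composition $\so(7) \hookrightarrow \so(8) \hookrightarrow \Cliff(8)$ sending each Lie algebra element to the corresponding quadratic, so $\mu^*(a)$ acts on $V$ by the spin representation of $\so(7)$; hence $\bigcap_{a \in \so(7)} \ker \mu^*(a) = V^{\Spin(7)}$.

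To finish I would compute $V^{\Spin(7)}$. As a $\Spin(8)$-module, $V = S^+ \oplus S^-$ is the sum of the two $8$-dimensional spin representations. The relevant embedding $\Spin(7) \subset \Spin(8)$ is the triality-twisted one in which the $\Spin(8)$-vector representation restricts to the irreducible $\Spin(7)$-spin representation $\Delta$; by triality one of $S^\pm$ then restricts to $\Delta$ and the other to $\bR \oplus \bR^7$ (trivial plus vector of $\Spin(7)$). Thus $V|_{\Spin(7)} \cong \bR \oplus \bR^7 \oplus \Delta$, giving $\dim V^{\Spin(7)} = 1$. Consequently $\dim A/\langle \mu^*\g\rangle = 16$; the quotient is nonzero, so Lemma~\ref{non-zero suffices} applies, and since it is a nonzero $16$-dimensional module over $A \cong \mathrm{Mat}_{16}(\bR)$ it must be isomorphic to $V$. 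Schur's lemma then gives $\Cliff(8)//\Spin(7) = \End_A(V) = \bR$.

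The main subtlety is the triality step identifying $S^\pm|_{\Spin(7)}$; this is the only nonformal input. It also nicely parallels the paper's observation that $\Cliff(8)//\Spin(8) = 0$, since under the full $\Spin(8)$ both spin representations are irreducible and admit no invariants, so $V^{\Spin(8)} = 0$ and the quotient module collapses.
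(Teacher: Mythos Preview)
Your argument is correct and takes a genuinely different route from the paper's. The paper works explicitly with the Cayley $4$-form $\phi$: it identifies $\Cliff(8)^{\Spin(7)} = \bR\{1,\phi,\theta\}$, shows by summing squares of the quadratic generators that $\phi + 14 \in \langle\mu^*\so(7)\rangle$, and then exhibits the element $\phi + 1 + \theta$ and checks it is annihilated on the left by every $\alpha \in \mu^*\so(7)$, so that the reduction is a nonzero quotient of~$\bR$. You instead exploit the ungraded isomorphism $\Cliff(8) \cong \mathrm{Mat}_{16}(\bR)$ to convert the left-ideal question into the computation of $V^{\Spin(7)}$, which you then settle by triality. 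Your approach is more conceptual and makes transparent the role of the triality-twisted embedding $\Spin(7)\subset\Spin(8)$; the paper's approach is more self-contained, in that it does not presuppose the matrix-algebra structure of $\Cliff(8)$ (which is, after all, Bott periodicity in its original Cartan form) or the branching rules for $\Spin(8)\downarrow\Spin(7)$. Two small caveats you should tighten: the simple supermodule is unique only up to parity shift (there is no odd $A$-linear automorphism of $V$, since $\Cliff(8)^0 \cong \mathrm{Mat}_8(\bR)\times\mathrm{Mat}_8(\bR)$ acts on the graded pieces $V_0$, $V_1$ by non-isomorphic simples), and your Schur step only computes the \emph{ungraded} endomorphism ring; the same observation about $\Cliff(8)^0$ shows there are no odd super-endomorphisms, so the super reduction is indeed the purely-even algebra~$\bR$.
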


By Lemma~\ref{non-zero suffices}, Theorem~\ref{theorem8} establishes that the cyclic module $\Cliff(8) / \langle \mu^*\so(7) \rangle$ is a Morita equivalence between $\Cliff(8)$ and $\bR$.

\begin{proof}
 The following construction of $\Spin(7)$, and its eight-dimensional spin representation, are developed in \cite{MR666108}. Consider the octonion algebra $\bO$ and the 4-form $\phi \in \bigwedge^4 \bO^*$ def\/ined by $\phi(a,b,c,d) = \langle a,b\times c \times d\rangle$, where the triple cross product is by def\/inition $b\times c \times d = \frac12 \bigl( b(\bar c d) - d(\bar c b) \bigr)$ and $\bar c$ is the octonionic conjugate of $c$. Then $\Spin(7)$ is precisely the subgroup of $\mathrm{SO}(8)$ f\/ixing $\phi$. In terms of coordinates $x_1,x_2,\dots,x_8$ on $\bR^{0|8}$, $\phi$ corresponds to the function
 \begin{gather*}
 \phi = x_{1234} + x_{1256} + x_{1278} + x_{1357} - x_{1368} - x_{1458} - x_{1467} \\
 \hphantom{\phi =}{} + x_{5678} + x_{3478} + x_{3456} + x_{2468} - x_{2457} - x_{2367} - x_{2358} ,
 \end{gather*}
 where we have abbreviated $x_{ij\dots k} {=} x_ix_j \cdots x_k$. Let $\theta {=} x_{12345678}$.
 The f\/ixed algebra $\Cliff(8)^{\Spin(7)}$ has basis $\{1,\phi,\theta\}$ and multiplication $\theta^2 = 1$, $\theta \phi = \phi\theta = \phi$, and $\phi^2 = 14\theta + 14 - 12\phi$.

 The image $\mu^*\so(7) \subseteq \bigwedge^2 \bR^8 \subseteq \Cliff(8)$ of the comoment map is spanned by quadratic elements of the form $\alpha = x_{ij} \pm x_{kl}$ such that $-\frac12 \alpha^2 - 1 = \mp x_{ijkl}$ is (with the given sign) a monomial in $\phi$. (There are $3 \cdot 14$ such $\alpha$s; given $\{i,j\}$, there are three $\alpha$s that include the monomial $x_{ij}$ and three that are disjoint from $\{i,j\}$, and these six span a three-dimensional space; this counts correctly the $21$-dimensional space $\so(7)$.) It follows that $\phi +14$, being a sum of $14$ terms of the form $-\frac12 \alpha^2$, is in the ideal $\langle \mu^* \so(7) \rangle$, and so $\Cliff(8) // \Spin(7)$ is a quotient of $\bR\{1,\phi,\theta\} / \langle \phi +14\rangle \cong \bR$.

 Each $\alpha = x_{ij} \pm x_{kl} \in \mu^*\so(7)$ determines a splitting $\phi = x_{ijkl}(1+\theta) + \kappa + \lambda$ where $\kappa$ is a~sum of four quartic monomials each of which has indices containing either $\{i,j\}$ or $\{k,l\}$ but not both, and $\lambda$ is a sum of eight quartic monomials each of which has indices intersecting the sets $\{i,j\}$ and $\{k,l\}$ at one element each. For example, when $\alpha = x_{13} - x_{57}$, we have
 \begin{gather*} \phi = \underbrace{x_{1357} + x_{2468}}_{x_{ijkl}(1+\theta)} + \underbrace{x_{1234} -x_{1368} + x_{5678} - x_{2457}}_\kappa \\
 \hphantom{\phi =}{} + \underbrace{x_{1256} + x_{1278} - x_{1458} - x_{1467} + x_{3478} + x_{3456}- x_{2367} - x_{2358}}_\lambda.
 \end{gather*}
 We see that $[\alpha,x_{ijkl}(1+\theta)] = 0$ and $ [\alpha,\kappa] = 0$. Since we know that $[\alpha,\phi] = 0$, we f\/ind $[\alpha,\lambda] = 0$ as well. Suppose $\beta$ is a quadratic monomial and $\nu$ a quartic monomial such that the indices in~$\beta$ and~$\nu$ overlap at one element. Then $\beta\nu = \frac12[\beta,\nu]$, from which it follows that $\alpha\lambda = \frac12[\alpha,\lambda] = 0$. It's also clear that $\alpha\kappa = 0$, since $\kappa$ factors as $(x_{ij} \mp x_{kl})(\dots)$ where $\alpha = x_{ij} \pm x_{kl}$ and $(\dots)$ is a~sum of two quadratic monomials that have no overlap with $\alpha$. Finally, $\alpha x_{ijkl}(1+\theta) = \alpha(-1-\theta)$. All together, we f\/ind:
\begin{gather*}
\alpha ( \phi + 1 + \theta) = \alpha\bigl( x_{ijkl}(1+\theta) + \kappa + \lambda + (1+\theta)\bigr) = \alpha(-1-\theta) + 0 + 0 + \alpha(1+\theta) = 0.
\end{gather*}
 It follows that $1\not\in \langle \mu^*\so(7)\rangle$, and so $\Cliff(8) // \Spin(7) \not\cong 0$, completing the proof.
\end{proof}

\subsection*{Acknowledgements}

I would like to thank the referees for their comments and improvements to this paper. This work was completed during the ``Gone Fishing 2016'' conference at University of Colorado, Boulder, which was supported by the NSF grant DMS-1543812. This research was also supported by the NSF grant DMS-1304054. The Perimeter Institute for Theoretical Physics is supported by the Government of Canada through the Department of Innovation, Science and Economic Development Canada and by the Province of Ontario through the Ministry of Research, Innovation and Science.

\pdfbookmark[1]{References}{ref}
\LastPageEnding

\end{document}